\def \ch{{\bf H}_{\mathbb C}}
\def\V{\mathbb V}
\def\R{\mathbb R}
\def\H{\mathbb H}
\def\Z{\mathbb Z}
\def\C{\mathbb C}
\def\K{\mathbb K}
\def\P{\mathbb P}
\def\A{\mathbb A}
\def\PSp{\mathrm{PSp}}
\def\PU{\mathrm{PU}}
\def\Sp{\mathrm{Sp}}
\def\PSL{{\rm PSL}}
\def\z{{\bf z}}
\def\w{{\bf w}}
\newtheorem{thm}{Theorem}[section]
\newtheorem{lem}{Lemma}[section]
\newtheorem{rmk}{Remark}[section]
\newtheorem{cor}{Corollary}[section]
\newcommand{\thmref}[1]{Theorem~\ref{#1}}
\newcommand{\be}{\begin{equation}}
\newcommand{\ee}{\end{equation}}
\newcommand{\bea}{\begin{eqnarray}}
\newcommand{\eea}{\end{eqnarray}}
\newcommand{\Bea}{\begin{eqnarray*}}
\newcommand{\Eea}{\end{eqnarray*}}
\newcommand{\bt}{\begin{Theorem}}
\newcommand{\et}{\end{Theorem}}
\begin{document}

\title[On Free Group Generated by Two Heisenberg Translations]{On Free Group Generated by Two Heisenberg Translations} 
\author[Sagar B. Kalane]{Sagar B. Kalane}
\address{ Indian Institute of Science Education and Research (IISER) Pune,
Dr Homi Bhabha Rd, Ward No. 8, NCL Colony, Pashan, Pune, Maharashtra 411008}
\email{sagark327@gmail.com, sagarkalane@iiserpune.ac.in}
\author[D. Tiwari]{Devendra Tiwari}
\address{Bhaskaracharya Pratishthana, Erandwane, Damle Path, Off Law College Road, Pune Maharashtra 411004}
\email{devendra9.dev@gmail.com}
\subjclass[2000]{30F40, 22E40, 20H10}
\keywords{ Free group, Heisenberg translation, discreteness.}
\date{\today}
\thanks{First author is supported by post doctoral institute fund of IISER Pune and second author is supported by ARSI-Foundation}
\begin{abstract}
In this paper, we will discuss the groups generated by two Heisenberg translations of ${\rm PSp}(2,1)$ and determine when they are free. The main result of the paper corrects an assertion made by Xie, Wang, Jiang in \cite{xwy} published in Canad. Math. Bull. $56(2013), 881-889,$ and from that we derive the result for the ${\rm PSp}(2,1)$ case.
\end{abstract}
\maketitle
\section{Introduction}

 To study discreteness of a group in ${\rm PSL}(2, \R)$ generated by prescribed elements is the central focus in the study of Fuchsian groups. Quasi-fuchsian theory expands this question to representations of surface
groups into ${\rm PSL}(2, \C)$ and similar low-dimensional Lie groups, and adds the question of faithfulness. In particular, Goldman and Parker \cite{gp} developed a complex hyperbolic quasi-fuchsian theory,
which started by examining representations of ideal triangle groups. This culminated in the proof of the Goldman-Parker conjecture by Schwartz \cite{RS} which states that a natural embedding of an ideal triangle group is discrete and faithful if and only if a certain ``angular invariant" is below a particular threshold. Work continues on the discreteness question for more intricate complex hyperbolic quasifuchsian groups and on the Higher Teichmuller Space of representations into other Lie groups.

\medskip It is an interesting problem to study when subgroup generated by two non-commuting parabolic elements is free. A similar problem in the case of Fuchsian and Kleinian groups studied by many authors. Especially in the works \cite{ak1, lyu, np}, authors have explored the conditions for two elements in Fuchsian or Kleinian groups to generate discrete free group. For example, Purzitsky \cite{np} found necessary and sufficient conditions for the subgroups generated by any pair $A, B \in \PSL(2, \R)$ to be discrete free product of cyclic groups $\langle A \rangle$ and $\langle B \rangle.$

\medskip A particular case of this classical problem is to find conditions when a subgroup $G$ of ${\rm PSL}(2, \C)$ generated by two non-commuting, parabolic, linear fractional transformations

$$A= \begin{pmatrix} 1 & \mu \\ 0 & 1 \end{pmatrix}, \; \;     B = \begin{pmatrix} 1 & 0 \\ \mu & 1 \end{pmatrix}$$

is free. 

\medskip This problem can be reformulated to find the set of complex numbers $\mu$ such that the corresponding group $G$ is free. Sanov and Brenner in their fundamental works \cite{lns, bre} provided the condition for $G$ to be free. Later Chang, Jennings, and Ree \cite{cjr} improved the previous condition to provide a weaker one. This was further improved by Lyndon and Ullman \cite{lu}, who gave very simple proof of results of Brenner \cite{bre} by using theorem of Macbeath.

\medskip In this paper, we focus on the work of Xie, Wang and Jiang \cite{xwy} in which they studied the groups generated by two non-commuting parabolic transformations of $\PU(2, 1)$ and derived conditions to determine when they are free. They consider two variants on the question and resolve them, for their Theorem $1.1$, by direct calculations and, for their Theorem $1.2$, by appealing to the ``faithful" part of
the Goldman-Parker-Schwartz Theorem. We find that the main result in \cite{xwy} i.e. {\rm Theorem $1.2$} of of Xie, Wang and Jiang doesn't work for all $\mu$ as suggested by theorem. 

\medskip In this work we correct their work and improved the condition for the resulting group to be free. As by product we obtained similar results for groups generated by two Heisenberg translation of $\PSp(2, 1)$. We further provide, using a classical result of Lyndon and Ullman \cite{lyu}, a very simple proof of {\rm Theorem $1.1$} of \cite{xwy}. Now we will state main result of our paper which is improved version of the result proved in \cite{xwy}(Theorem $1.2$).
\begin{thm} \label{gh}
	
	Let $\mu \in \C$ and assume that the subgroup ${\rm G} \subset \PU(2,1)$ is generated by
	
	$$ A= \begin{pmatrix} 1 & 2\sqrt{2} & -4 \\
	0 & 1 & -2\sqrt{2} \\ 0 & 0 & 1 \end{pmatrix} \; \text{and} ~\; B = \begin{pmatrix} 1 & 0 & 0 \\ 2\sqrt{2}\mu & 1 & 0 \\ -4|\mu|^2 & -2\sqrt{2}\bar{\mu} & 1
	\end{pmatrix}.$$
	If 
	$$  |\mu|^2 = -\Re(\mu) \geq \frac{3}{128}$$
	
	then ${\rm G}$ is freely generated by $A$ and $B.$
\end{thm}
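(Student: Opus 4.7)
\medskip The plan is to apply a ping-pong / Klein combination argument for the cyclic groups $\langle A\rangle$ and $\langle B\rangle$ on the boundary $\partial\ch$, working in the Siegel/Heisenberg model and adapting the classical Lyndon--Ullman approach to the Cygan geometry. In this model $\partial\ch\setminus\{\infty\}$ is the Heisenberg group $\mathfrak{N}\cong\C\times\R$ with the Cygan metric $\rho$; $A$ is a pure Heisenberg translation fixing $\infty$ by a vector $\tau_{A}$ determined by its off-diagonal entries $2\sqrt{2}$ and $-4$; and $B$ is a unipotent parabolic fixing $o=[0:0:1]^{T}$ that moves $\infty$, so $B$ admits Cygan isometric spheres $I_{B}$ and $I_{B^{-1}}$ centered at $B^{-1}(\infty)$ and $B(\infty)$ of common Cygan radius $r_{B}=r_{B}(\mu)$, to be computed from the Hermitian form and the $\mu$-dependent entries of $B$.

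\medskip The core of the proof is to verify that the $\langle A\rangle$-translates of the pair of spheres $I_{B^{\pm1}}$ are pairwise disjoint, for then the Ford-type fundamental domain $F_{B}$ of $\langle B\rangle$ (the exterior of $I_{B}\cup I_{B^{-1}}$ in $\mathfrak{N}$) and the Heisenberg slab $F_{A}$ of $\langle A\rangle$ (of Cygan width $|\tau_{A}|$ in the direction of $\tau_{A}$) intersect in a domain bounding a fundamental polyhedron for $\langle A,B\rangle$, and the Klein--Maskit Combination Theorem (equivalently Macbeath's ping-pong lemma applied to the corresponding tiles) yields $\langle A,B\rangle\cong\langle A\rangle*\langle B\rangle$, which is freely generated by $A$ and $B$ since both are of infinite order. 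By Cygan-translation invariance the disjointness of the $\langle A\rangle$-translates collapses to the single inequality $|\tau_{A}|\geq 2r_{B}$; evaluating both sides on the constraint locus $|\mu|^{2}=-\Re(\mu)$ (equivalently $|2\mu+1|=1$, the slice on which $\mathrm{tr}(AB)=3+16\Re(\mu)+16|\mu|^{2}=3$ so that $AB$ itself is unipotent) converts this into exactly $|\mu|^{2}\geq 3/128$.

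\medskip The main technical obstacle is the sharp extraction of the constant $3/128$: both $r_{B}$ and $|\tau_{A}|$ are nontrivial functions of $\mu$ through the Hermitian form and the entries $2\sqrt{2}\mu$, $-4|\mu|^{2}$ of $B$, and the bound must match exactly rather than up to a constant (this is precisely where the computation in \cite{xwy} has to be redone). A secondary subtlety is that the hypothesis $|\mu|^{2}\geq 3/128$ is non-strict: at equality the $\langle A\rangle$-translates of $I_{B^{\pm1}}$ may meet $I_{B^{\mp1}}$ tangentially at a single point, and one must argue that such tangential contact does not break the ping-pong conclusion, either directly by checking that the contact point lies outside the required interiors, or via a standard limit/perturbation argument.
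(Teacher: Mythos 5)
Your strategy is not the paper's, and more importantly it cannot produce the constant $3/128$. The paper does no ping-pong with Cygan isometric spheres at all: it writes $A=i_0i_2$ and $B=i_2i_1$ as products of inversions in the three complex geodesics spanned by pairs from $\{p_0,p_1,p_2\}=\{0,\infty,(\zeta,\nu)\}$, observes that this is possible precisely when $\Re(\mu)+|\mu|^2=0$ (your correct remark that $\mathrm{tr}(AB)=3$ on this locus, i.e.\ that $AB$ is unipotent, is exactly this solvability condition, and it is used essentially, not incidentally), computes the Cartan angular invariant $\A(p_0,p_1,p_2)=\arg(1-i\nu)$ together with $|\mu|^2=1/(1+\nu^2)$, invokes the Goldman--Parker--Schwartz theorem to conclude that $\langle i_0,i_1,i_2\rangle$ is the free product $\Z/2\ast\Z/2\ast\Z/2$ whenever $|\tan\A|=|\nu|\le\sqrt{125/3}$, and finally uses the Kurosh subgroup theorem (Lemma \ref{lem1}) to deduce that $\langle i_0i_2,i_2i_1\rangle=\langle A,B\rangle$ is free. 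The equivalence $\nu^2\le 125/3\Leftrightarrow|\mu|^2\ge 1/(1+125/3)=3/128$ is where the constant comes from.

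The gap in your plan is the reduction to the inequality $|\tau_A|\ge 2r_B$. With the standard normalization the Cygan radius of $I_{B^{\pm1}}$ satisfies $r_B^2=2/(4|\mu|^2)=1/(2|\mu|^2)$, while $A$ is the horizontal Heisenberg translation by $(\zeta_0,\nu_0)=(-2,0)$, whose Cygan displacement of the origin is the absolute constant $2$; your inequality therefore reads $2\ge\sqrt2/|\mu|$, i.e.\ $|\mu|^2\ge 1/2$, and any reasonable variant of the disjointness condition yields $|\mu|^2\ge c$ with $c$ of order $1$, nowhere near $3/128\approx 0.023$. In the intermediate range the $\langle A\rangle$-translates of $I_{B^{\pm1}}$ overlap heavily, so no Klein--Maskit combination or ping-pong is available; this is not a matter of ``extracting the sharp constant more carefully'' but a structural obstruction --- the critical value $125/3$ is precisely the threshold of the Goldman--Parker conjecture, whose proof by Schwartz \cite{RS} is a deep theorem, and it is not reachable by elementary separation arguments in the Cygan metric. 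A secondary error in the setup: since $B$ is conjugate to a \emph{non-vertical} Heisenberg translation, its isometric spheres are centered at $B(\infty)=(-1/(2\bar\mu),0)$ and $B^{-1}(\infty)=(1/(2\bar\mu),0)$, which lie at Cygan distance $1/|\mu|<2r_B$; hence $I_B$ and $I_{B^{-1}}$ already intersect in more than a single point of tangency, and the exterior of their union is not a fundamental domain for $\langle B\rangle$, so even the weaker order-one statement would need a different argument.
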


\begin{rmk} 
The reason for the error in the {\rm Theorem 1.2} of \cite{xwy} is that, in order to apply the the result of Schwartz \cite{RS}, one needs $AB$ to be unipotent and they made a mistake in the calculation. To be precise, instead of computing {\rm arg}(z), they computed $\Im(z)$ (for details see Section $3$).
\end{rmk}

\medskip As an immediate corollary, we have the following result in case of $\PSp(2, 1)$.

\begin{cor} \label{ghc}Let $\mu \in \H$ and assume that the subgroup ${\rm G} \subset {\rm PSp}(2,1)$ is generated by
	$$ A= \begin{pmatrix} 1 & 2\sqrt{2} & -4 \\
	0 & 1 & -2\sqrt{2} \\ 0 & 0 & 1 \end{pmatrix} \; \text{and} ~\; B = \begin{pmatrix} 1 & 0 & 0 \\ 2\sqrt{2}\mu & 1 & 0 \\ -4|\mu|^2 & -2\sqrt{2}\bar{\mu} & 1
	\end{pmatrix}.$$
	If 
	$$  |\mu|^2 = -\Re(\mu) \geq \frac{3}{128},$$
	then ${\rm G}$ is freely generated by $A$ and $B.$

\end{cor}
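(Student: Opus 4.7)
\medskip
The plan is to reduce the quaternionic corollary to the complex theorem (Theorem~\ref{gh}). The key observation is that each entry of $A$ and $B$ depends only on $\mu$, $\bar\mu$, $|\mu|^2$, and real constants, so every entry lies in the real subalgebra of $\H$ generated by $\mu$. When $\mu \notin \R$ this subalgebra is a two-dimensional copy of $\C$ sitting inside $\H$; if $\mu \in \R$ one simply picks any unit purely imaginary quaternion to play the role of $i$, and the rest of the argument goes through unchanged.

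Concretely, I would write $\mu = \Re(\mu) + |\Im(\mu)|\mathbf{k}$ with $\mathbf{k}$ a unit purely imaginary quaternion, and set $\C_{\mathbf{k}} := \R + \R\mathbf{k} \subset \H$. The assignment $\sigma : \C_{\mathbf{k}} \to \C$, $a + b\mathbf{k} \mapsto a + bi$, is an $\R$-algebra isomorphism that intertwines quaternionic with complex conjugation and preserves real parts and absolute values. Applying $\sigma$ entrywise to $A$ and $B$ yields matrices $A', B'$ of the same shape, with $\mu$ replaced by $\tilde\mu := \sigma(\mu) \in \C$, lying in the standard embedded $\mathrm{U}(2,1) \subset \Sp(2,1)$ (preserving the same Hermitian form, whose signature matrix has only real entries). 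Because $\sigma$ preserves real parts and absolute values, the hypothesis $|\mu|^2 = -\Re(\mu) \geq 3/128$ translates directly to $|\tilde\mu|^2 = -\Re(\tilde\mu) \geq 3/128$, so Theorem~\ref{gh} applies and the image group in $\PU(2,1)$ is free on $A'$ and $B'$.

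The final step is to transfer freeness back. Suppose $W$ is a nontrivial reduced word in $A$ and $B$ that represents the identity in $\PSp(2,1)$; then $W(A,B) = \pm I$ in $\Sp(2,1)$. All matrix entries appearing in the computation of $W(A,B)$ lie inside the commutative subfield $\C_{\mathbf{k}}$, so $\sigma$ intertwines the quaternionic matrix product with the complex one and sends the identity above to $W(A',B') = \pm I$ in $\mathrm{U}(2,1)$, i.e., $W$ is trivial in $\PU(2,1)$. Theorem~\ref{gh} then forces $W$ to be empty, a contradiction. The only genuine subtlety in this otherwise formal transfer is ensuring that $\sigma$ remains multiplicative on all matrices encountered during the computation of $W(A,B)$; this is precisely why one restricts attention to the commutative slice $\C_{\mathbf{k}} \subset \H$ rather than attempting to apply $\sigma$ to arbitrary quaternionic matrix entries, and it constitutes the main point requiring care in what is otherwise a straightforward corollary of Theorem~\ref{gh}.
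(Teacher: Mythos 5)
Your proof is correct and follows essentially the same route as the paper: both reduce to Theorem~\ref{gh} by identifying the commutative subalgebra $\R+\R\,\Im(\mu)\subset\H$ with $\C$, so that $A$ and $B$ become the complex matrices of that theorem with a parameter having the same real part and modulus as $\mu$. The only cosmetic difference is that the paper realizes your isomorphism $\sigma$ concretely as conjugation by ${\rm diag}(\alpha,\alpha,\alpha)$ for a unit quaternion $\alpha$ carrying $\tau=\Re(\mu)+\sqrt{|\mu|^2-\Re(\mu)^2}\,i$ to $\mu$, which makes the transfer of freeness an immediate consequence of conjugation being a group automorphism of $\Sp(2,1)$, whereas you verify directly that $\sigma$ applied entrywise is multiplicative on matrices over the commutative slice.
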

After discussing some background material in Section $2$, we prove Theorem $1.2$ and Corollary $1.2$ in the Section $3.$

\section{Preliminaries}\label{prel}

\subsection{The Quaternions}Let $\H$ denote the division ring of quaternions.  Any $ q\in {\H}$ can be uniquely written as  $q=r_{0}+r_{1}i+r_{2}j+r_{3}k$, where $r_{0},r_{1},r_{2},r_{3}\in \R$, and  $ i,j,k$ satisfy relations:  $i^{2}=j^{2}=k^{2}=ijk=-1$. The real number $r_0$ is called the real part of $q$ and we denote it by $\Re(q)$. The imaginary part of $q$ is $\Im(q)=r_{1}i+r_{2}j+r_{3}k$. The conjugate of $q$ is defined as $\overline {q}= r_0-r_1 i -r_2 j - r_3 k$.
The norm of $q$ is $|q|=\sqrt{r_0^{2}+r_1^{2}+r_2^{2}+r_3^{2}}$.

\subsection{Hyperbolic Space}
Let $\K = \C$ or $\H$ and suppose $\V=\K^{2,1}$ be the $3$ dimensional right vector over $\K$ equipped with the Hermitian form of signature $(2,1)$ given by 
$$\langle\z,\w\rangle=\w^{\ast}H\z=\bar w_3 z_1+\bar w_2 z_2+\bar w_1 z_3,$$
where $\ast$ denotes conjugate transpose. The matrix of the Hermitian form is given by
\begin{center}
$H=\left[ \begin{array}{cccc}
            0 & 0 & 1\\
           0 & 1 & 0 \\
 1 & 0 & 0\\
          \end{array}\right].$
\end{center}
\medskip We consider the following subspaces of $\K^{2,1}:$
$$\V_{-}=\{\z\in\K^{2,1}:\langle\z,\z \rangle<0\}, ~ \V_+=\{\z\in\K^{2,1}:\langle\z,\z \rangle>0\},$$
$$\V_{0}=\{\z-\{{\bf 0}\}\in\K^{2,1}:\langle\z,\z \rangle=0\}.$$
\medskip A vector $\z$ in $\K^{2,1}$ is called \emph{positive, negative}  or \emph{null}  depending on whether $\z$ belongs to $\V_+$,   $\V_-$ or  $\V_0$. There are two distinguished points in $\V_{0}$ which we denote by  $\bf{o}$ and $\bf\infty,$ given by
$$\bf{o}=\left[\begin{array}{c}
               0\\0\\  1\\
              \end{array}\right],
~~ \bf\infty=\left[\begin{array}{c}
               1\\0 \\ 0\\
              \end{array}\right].$$

\medskip Let $\P:\K^{2,1}-\{0\}\longrightarrow  \K \P^2$ be the right projection onto the respective projective space. Image of a vector $\z$ will be denoted by $z$.  The respective $\K$-hyperbolic space $\bf{H}^2_{\mathbb{K}}$ is defined to be $\P \V_{-}$. The ideal boundary of $\bf{H}^2_{\mathbb{K}}$ is $\partial \bf{H}^2_{\mathbb{K}}=\P \V_{0}$. It is clear that $\infty\in\partial \bf{H}^2_{\mathbb{K}}$. We mention that  $g\in {\rm U}(2, 1)$ acts on
$\bf{H}^2_{\mathbb{K}} \cup \partial \bf{H}^2_{\mathbb{K}}$ as $
g(z)=\P g\P^{-1}(z).$ For a point $\z=\begin{bmatrix}z_1 & z_2 &  z_{3}\end{bmatrix}^T \in \V_- \cup \V_0$, projective coordinates are given by $(w_1, w_2)$, where $w_i=z_i z_{3}^{-1}$ for $i=1,2$. In projective coordinates we have 

$$\bf{H}^2_{\mathbb{K}} =\{(w_1,w_2)\in \K^2 : \ 2\Re(w_1)+|w_2|^2<0\},$$
$$\partial \bf{H}^2_{\mathbb{K}}=\P \V_{0}-\infty=\{(w_1,w_2)\in \K^2 :2\Re(w_1)+|w_2|^2=0\}.$$

\medskip This is the Siegel domain model of $\bf{H}^2_{\mathbb{K}}$. Similarly one can define the ball model by replacing $H$ with a equivalent Hermitian form given by the diagonal matrix having one diagonal entry $-1$ and all other diagonal entries $1$. We shall mostly use the Siegel domain model here. For more details we refer to \cite{chen}.

\medskip Given a point $z$ of $\bf{H}^2_{\mathbb{K}} \cup \partial \bf{H}^2_{\mathbb{K}} -\{\infty\} \subset\K \P^2$ we may lift $z=(z_1,z_2)$ to a point $\z$ in $\V_0 \cup \V_{-}$, called the \emph{standard lift} of $z$. It is represented in non-homogeneous coordinates by
               
 $$ {\bf z} = \begin{pmatrix} z_1 \\ z_2 \\ 1 \end{pmatrix}, \; \; z_1 +\bar{z_1} + |z_2|^2 = 0. $$

 A finite point $z$ lies in the boundary of the Siegel domain if its standard lift to $\K^{2, 1}.$

\medskip We write $\zeta = \frac{z_2}{\sqrt{2}} \in \K,$ and this condition becomes $2\Re(z_1) = -2|\zeta|^2.$ Hence we may write $z_1 = - |\zeta|^2 + \nu$ for $\nu \in \Im(\K).$ Therefore for $\zeta \in \K$ and $\nu \in  \Im(\K)$,

$$ {\bf z} = \begin{pmatrix} -|\zeta|^2 + \nu \\ \sqrt{2}\zeta \\ 1 \end{pmatrix}.$$

\medskip The finite points in the
boundary of $\bf{H}^2_{\mathbb{K}}$  naturally carry the structure of the
generalised Heisenberg group $\mathfrak{N}_2$, which is defined to
be $\mathfrak{N}_2 = {\K}\times \Im({\K})$, with the group
law
\begin{equation}\label{heilaw}(\zeta_1, \nu_1)(\zeta_2,\nu_2)=(\zeta_1+\zeta_2, \nu_1+\nu_2+2\Im(\zeta_2^*\zeta_1)).
\end{equation}

\subsection{Heisenberg Translation}
The Heisenberg group acts on itself by Heisenberg translation. For $(\zeta_0,
\nu_0) \in \mathfrak{N}_2,$ this is given by

$$ T(\zeta_0, \nu_0) : (\zeta, \nu) \mapsto (\zeta_0 + \zeta, \nu+\nu_0 + 2\Im(\zeta^{*}\zeta_0) = ((\zeta_0,
\nu_0)(\zeta,
\nu))$$

\medskip Heisenberg translation is one type of parabolic element fixing $\infty$, as a matrix $T_{(\zeta_0, \nu_0)}$ is given by
\begin{equation}\label{htrans} T_{(\zeta_0, \nu_0)}=\left(
     \begin{array}{ccc}
       1 & -\sqrt{2}\zeta_0^{*} & -|\zeta_0|^2+\nu \\
       0 & 1 & \sqrt{2}\zeta_0 \\
       0 & 0 & 1 \\
     \end{array}
   \right).
\end{equation}
\medskip When $\zeta_0=0$, $T_{(\zeta_0, \nu_0)}$ is called the vertical translation;
Otherwise, $T_{(\zeta_0, \nu_0)}$ is called the non-vertical translation.

\section{Main Result}

\medskip In this section we improve the results of \cite{xwy} for general Heisenberg translations in $\PU(2, 1)$. If we take $\mu = \frac{-3}{4}$, then it satisfies the following condition of {\rm Theorem 1.2} in \cite{xwy}:

$$\frac{1}{\sqrt{1 +\left(\tan^{-1}\left(\sqrt{\frac{125}{3}}\right)\right)^2}} \leq |\mu|= \frac{3}{4} .$$
On the other hand, for $\mu = \frac{-3}{4}$ we get
$$AB= \begin{pmatrix}
4 & -4\sqrt{2} & -4 \\ 3\sqrt{2} & -5 & -2\sqrt{2} \\ \frac{-9}{4} & \frac{3}{\sqrt{2}} & 1 \end{pmatrix}.$$

Here ${\rm tr}(AB) = 0$ and thus $AB$ has order $3,$ which implies that the subgroup generated by $A$ and $B$ is not free.


\medskip The reason for this error is the following. In order to apply the result of Schwartz \cite{RS}, one needs $AB$ to be unipotent. Observe that, ${\rm tr}(AB) = 3 + 16\Re(\mu) + 16|\mu|^2$ and hence we need, $$\Re(\mu) + |\mu|^2 = 0.$$

\medskip If we take two vertical Heisenberg translations $A$ and $B$ which fixes $\infty$ and $ o,$ respectively, then we will get a simple proof of Theorem $1.1$ in \cite{xwy} by using the following result in \cite{lu}. 
\begin{lem}\cite{lu} \label{lu1}
	If $m,n \in \C$ and $|mn| \geq 4$, then 
	
	$$A_o= \begin{pmatrix}
1 & m \\ 0 & 1 \end{pmatrix} \;\text{and} ~ \; B_o = \begin{pmatrix}
1 & 0 \\  n & 0 \end{pmatrix},$$ freely generate a free group.
\end{lem}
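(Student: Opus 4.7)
The plan is to derive this classical Lyndon--Ullman statement by a ping-pong argument for the action of $\langle A_o,B_o\rangle$ on $\hat{\C}=\C\cup\{\infty\}$ by M\"obius transformations. The generator $A_o$ acts as the parabolic translation $z\mapsto z+m$ fixing $\infty$, while $B_o$ acts as the parabolic $z\mapsto z/(nz+1)$ fixing $0$. I will produce disjoint open neighborhoods $X\ni\infty$ and $Y\ni 0$ such that every nonzero power of $A_o$ maps $Y$ into $X$ and every nonzero power of $B_o$ maps $X$ into $Y$; the ping-pong lemma will then immediately imply that $A_o$ and $B_o$ generate a free product of two infinite cyclic groups, i.e., a free group of rank two.

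First I would normalize. Conjugation of $A_o,B_o$ by the diagonal matrix $\mathrm{diag}(\lambda,\lambda^{-1})\in\mathrm{SL}(2,\C)$ replaces $(m,n)$ by $(\lambda^{2} m,\lambda^{-2} n)$ and therefore preserves the product $mn$. Choosing $\lambda^{2}=\sqrt{|n|/|m|}$ reduces to the balanced case $|m|=|n|=c$, where $c=\sqrt{|mn|}\ge 2$. This places the two parabolic fixed points $0$ and $\infty$ symmetrically with respect to the unit circle, which will serve as the natural Schottky circle separating the two ping-pong sets.

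Take $X=\{z\in\hat{\C}:|z|>1\}\cup\{\infty\}$ and $Y=\{z\in\C:|z|<1\}$; these are disjoint. For any $k\in\Z\setminus\{0\}$ and $z\in Y$ one has
$$|A_o^{k}(z)|=|z+km|\ \ge\ |k||m|-|z|\ >\ c-1\ \ge\ 1,$$
so $A_o^{k}(Y)\subset X$. Similarly, for $k\ne 0$ and $z\in X\cap\C$, the identity $1/B_o^{k}(z)=kn+1/z$ gives
$$\bigl|1/B_o^{k}(z)\bigr|\ \ge\ |k||n|-1/|z|\ >\ c-1\ \ge\ 1,$$
and $B_o^{k}(\infty)=1/(kn)$ has modulus at most $1/c\le 1/2$; hence $B_o^{k}(X)\subset Y$. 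Standard ping-pong now closes the argument: any nontrivial reduced word in $A_o^{\pm 1},B_o^{\pm 1}$, cyclically rearranged to begin and end with powers of the same generator, carries a point of the opposite ping-pong set into its own set and therefore cannot equal the identity.

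The point requiring care is the boundary case $|mn|=4$, where $c=2$: the strict inequalities $|A_o^{k}(z)|>1$ and $|B_o^{k}(z)|<1$ above survive precisely because the ping-pong sets are open, so the strict estimates $|z|<1$ and $|z|>1$ can be used inside the triangle-inequality bounds. With the open sets in hand, no separate limiting argument is required.
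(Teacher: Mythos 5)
Your proof is correct, and it is worth noting at the outset that the paper supplies no proof of this lemma at all: Lemma~\ref{lu1} is quoted from Lyndon--Ullman \cite{lu} as a known result (the introduction describes their route as going through a theorem of Macbeath), so your write-up is the only actual argument on the table. Your self-contained ping-pong proof is the standard elementary alternative and it checks out. The conjugation by $\mathrm{diag}(\lambda,\lambda^{-1})$ preserves $mn$ and legitimately reduces to the balanced case $|m|=|n|=c=\sqrt{|mn|}\ge 2$; the estimates $|z+km|\ge |k|c-|z|>c-1\ge 1$ for $|z|<1$ and $|kn+1/z|\ge |k|c-1/|z|>c-1\ge 1$ for $|z|>1$ are right; you correctly exploit the openness of the ping-pong sets so that the boundary case $|mn|=4$ needs no separate limiting argument; and the image of $\infty$ under $B_o^{k}$ is handled (note also that no point of $X$ is sent to $\infty$ by $B_o^{k}$, since the pole $-1/(kn)$ has modulus at most $1/2$). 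You also correctly read the generator $B_o$, whose printed $(2,2)$-entry $0$ in the statement is evidently a typo for $1$ (as written the matrix is singular). The one imprecision is in the closing step: a reduced word with an even number of syllables, such as $A_o^{j}B_o^{k}$, cannot be brought to a form beginning and ending with powers of the same generator by cyclic permutation alone; one conjugates by a suitable additional power of $A_o$ (which preserves nontriviality), or simply invokes the two-subgroup ping-pong lemma, whose hypotheses hold verbatim here because both cyclic factors are infinite. This is a one-line repair of a completely standard step, not a gap in the method. Compared with the paper's citation-only treatment, your argument buys self-containedness and transparency about where the constant $4$ enters (namely $c-1\ge 1$), at the cost of not recovering the finer information in Lyndon--Ullman's original analysis of the region of $\mu$ for which freeness holds.
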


As proof of following theorem is very simple, so we include it here for completeness.

\begin{thm}\label{sd}
	Suppose the subgroup $G \subset {\rm PU}(2,1)$ is generated by 
	
	$$A_1= \begin{pmatrix}
	1 & 0 & m \\ 0 & 1 & 0 \\ 0 & 0 & 1 \end{pmatrix} \;\text{and} ~ \; B_1 = \begin{pmatrix}
	1 & 0 & 0 \\ 0 & 1 & 0 \\ n & 0 & 1 \end{pmatrix}.$$
	
	If $|mn|\geq 4$, then $G$ is free.
\end{thm}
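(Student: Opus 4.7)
The plan is to reduce the statement directly to Lemma \ref{lu1} via a group homomorphism that ``forgets'' the middle row and column. The key observation is that both generators $A_1$ and $B_1$ have second row $(0,1,0)$ and second column $(0,1,0)^T$, and this property is preserved under matrix multiplication.

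First I would introduce the subset
$$ H = \left\{ \begin{pmatrix} a & 0 & b \\ 0 & 1 & 0 \\ c & 0 & d \end{pmatrix} : a, b, c, d \in \C \right\} \cap \GL(3, \C), $$
and verify by a direct block-multiplication that $H$ is a subgroup of $\GL(3, \C)$. A short computation shows that the map
$$ \phi : H \longrightarrow \GL(2, \C), \qquad \phi \begin{pmatrix} a & 0 & b \\ 0 & 1 & 0 \\ c & 0 & d \end{pmatrix} = \begin{pmatrix} a & b \\ c & d \end{pmatrix} $$
is a group homomorphism. Since $A_1, B_1 \in H$, we have $G \subset H$, so $\phi$ restricts to a homomorphism on $G$.

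Next I would record that $\phi(A_1) = \begin{pmatrix} 1 & m \\ 0 & 1 \end{pmatrix} = A_o$ and $\phi(B_1) = \begin{pmatrix} 1 & 0 \\ n & 1 \end{pmatrix} = B_o$. Under the hypothesis $|mn|\geq 4$, Lemma \ref{lu1} guarantees that $A_o$ and $B_o$ freely generate a free subgroup of $\GL(2, \C)$.

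Finally, suppose $W(A_1, B_1)$ is a nontrivial reduced word in $A_1$ and $B_1$. Applying $\phi$, the image $W(A_o, B_o)$ is the same reduced word in $A_o, B_o$, which is nontrivial in $\GL(2, \C)$ by the freeness just established. Hence $W(A_1, B_1) \neq I$ in $\GL(3, \C)$, and therefore $G$ is freely generated by $A_1$ and $B_1$. There is no real obstacle here: the only thing to check carefully is that multiplication inside $H$ preserves the shape, which is immediate from the block structure, so the argument is essentially a one-line reduction to the classical Lyndon--Ullman result.
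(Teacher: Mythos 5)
Your proposal is correct and is essentially the paper's own argument: the paper defines the embedding $\phi:\GL(2,\C)\to\GL(3,\C)$ sending $\begin{pmatrix} a & b \\ c & d\end{pmatrix}$ to the bordered $3\times 3$ matrix, identifies $\langle A_1,B_1\rangle$ as the image of $\langle A_o,B_o\rangle$, and invokes the Lyndon--Ullman lemma, which is exactly your reduction viewed in the opposite direction. The only difference is presentational (you work with the projection from the subgroup $H$ rather than the embedding into it), so there is nothing further to add.
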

\begin{proof}
	Consider the following embedding $\phi$ of $GL(2,\C)$ in $GL(3,\C)$ defined by $$\phi\left( \begin{pmatrix}
	a & b \\ c & d \end{pmatrix}\right)= \begin{pmatrix}
	a & 0 & b \\ 0 & 1 & 0 \\ c & 0 & d \end{pmatrix}.$$ Thus under this embedding, image of $\langle A_0,B_0\rangle$ is $G=\langle A_1,B_1\rangle.$ Hence by using Lemma \ref{lu1}, $G$ is free if $|mn|\geq 4.$
\end{proof}
We obtain the following corollary of Theorem\ref{sd} for the vertical Heisenberg translations in ${\rm PSp}(2,1)$.

\begin{cor} \label{vh}
	Suppose the subgroup $G \subset {\rm PSp}(2,1)$ is generated by 
	$$A = \begin{pmatrix}
	1 & 0 & \tau \\ 0 & 1 & 0 \\ 0 & 0 & 1 \end{pmatrix} \; \text{and}~ \; B = \begin{pmatrix}
	1 & 0 & 0 \\ 0 & 1 & 0 \\ \tau & 0 & 1 \end{pmatrix},$$
	where $\tau$ is purely imaginary quaternion, i.e., $ \tau = t_1i + t_2j +t_3k.$  If $|\tau|\geq 2$, then $G$ is free.
\end{cor}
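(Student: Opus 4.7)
The plan is to reduce Corollary~\ref{vh} directly to Theorem~\ref{sd} by embedding the relevant subring of $\H$ into $\C$. The key observation is that any nonzero purely imaginary quaternion $\tau$ generates a commutative $\R$-subalgebra $\R[\tau] \subset \H$, and, since $\bar{\tau} = -\tau$, one has $\tau^2 = -\tau\bar{\tau} = -|\tau|^2$. I would first define an $\R$-algebra isomorphism $\psi:\R[\tau]\to\C$ by the rule $a+b\tau \mapsto a + b\, i\, |\tau|$ for $a,b\in\R$; this is a valid ring isomorphism because $\tau^2=-|\tau|^2$ matches $(i|\tau|)^2=-|\tau|^2$, and $\{1,\tau\}$ is an $\R$-basis of $\R[\tau]$ (the case $\tau=0$ is excluded by $|\tau|\geq 2$). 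Since every entry of $A$, $B$, $A^{-1}$, $B^{-1}$ lies in $\R[\tau]$, every word in the generators produces a matrix in $\GL(3,\R[\tau])$.

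Next I would apply $\psi$ entry-wise to obtain
$$A' = \begin{pmatrix} 1 & 0 & i|\tau| \\ 0 & 1 & 0 \\ 0 & 0 & 1 \end{pmatrix}, \qquad B' = \begin{pmatrix} 1 & 0 & 0 \\ 0 & 1 & 0 \\ i|\tau| & 0 & 1 \end{pmatrix},$$
which are precisely the matrices of Theorem~\ref{sd} with $m=n=i|\tau|$. The hypothesis $|\tau|\geq 2$ gives $|mn| = |\tau|^2 \geq 4$, so Theorem~\ref{sd} guarantees that $\langle A',B'\rangle$ is freely generated.

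Finally, I would transfer this conclusion back to $G$. Because $\psi$ is a ring isomorphism, it induces an injective group homomorphism $\GL(3,\R[\tau])\hookrightarrow \GL(3,\C)$ carrying $A\mapsto A'$ and $B\mapsto B'$; hence a reduced nontrivial word in $A,B$ could be the identity only if its image in $\langle A',B'\rangle$ were trivial, which is impossible by the previous step. I do not expect any genuine obstacle in this plan: the only point that needs checking is the well-definedness of $\psi$, which is immediate from $\tau^2=-|\tau|^2$ and the commutativity of $\R[\tau]$. Once the correct commutative subring of $\H$ is identified, Corollary~\ref{vh} becomes a formal consequence of Theorem~\ref{sd}.
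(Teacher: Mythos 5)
Your proof is correct and is essentially the paper's argument: the paper realizes your isomorphism $\R[\tau]\cong\C$, $\tau\mapsto i|\tau|$, concretely as a simultaneous conjugation of $A$ and $B$ by $C=\mathrm{diag}(\alpha,1,\alpha)$, where $\alpha$ is a unit quaternion with $\alpha\tau\alpha^{-1}=|\tau|\,i$, and then likewise invokes Theorem~\ref{sd} with $m=n=i|\tau|$ and $|mn|=|\tau|^2\geq 4$. The two reductions are the same map in different clothing, so no further comment is needed.
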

\begin{proof}
	$A$ and $B$ can be simultaneously conjugated to $A_1$ and $B_1$, respectively by the conjugation element
	$C={\rm diag}(\alpha, 1, \alpha),$ where $\alpha \tau \alpha^{-1}=ti$. Here $\alpha$ is a unit quaternion and $t=|\tau|=\sqrt{t_1^2 + t_2^2 + t_3^2}$.  Now the result follows from \thmref{sd}.
\end{proof}

Before proving the main theorem, we state some lemmas used in our proof of the theorem. By a classical theorem of Kurosh, the subgroup of a free product group have a particularly simple structure as described in the following lemma:

\begin{lem}{(Kurosh Subgroup Theorem)} Let $G = \ast_{i\in I}F_i$ be the free product of a collection of group $F_i$. If $H$ is a subgroup of $G$, then $H$ decomposes as a free product of the form

$$ H = F \ast(\ast_{i \in I}(\ast_{j \in J(i)} A \cap u_jF_iu_j^{-1})),$$
where $F$ is a free group. Consequently, $H$ is the free product of a free group and of various subgroups that are the intersections of $H$ with conjugates of the $F_i$.
\end{lem}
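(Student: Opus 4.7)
The approach I would take is through Bass-Serre theory, which provides the cleanest geometric account of Kurosh's decomposition. The free product structure $G = \ast_{i \in I} F_i$ is equivalent to $G$ being the fundamental group of a graph of groups whose underlying graph is a ``star'' having one central vertex carrying the trivial group, one outer vertex for each $i \in I$ carrying the group $F_i$, and trivial edge groups. The associated Bass-Serre tree $T$ is then a tree on which $G$ acts without inversions, with trivial edge stabilizers and with each vertex stabilizer either trivial (for the central-type vertices) or a conjugate of some $F_i$ (for outer-type vertices). The first step is to construct $T$ carefully and verify these properties: connectedness follows from the fact that every element of $G$ is a product of elements of the $F_i$, and acyclicity follows from the uniqueness of reduced normal forms in a free product.

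Next, I would restrict the $G$-action on $T$ to the subgroup $H \le G$. The stabilizer in $H$ of any edge is a subgroup of the corresponding trivial $G$-stabilizer, hence trivial. The stabilizer in $H$ of an outer-type vertex whose $G$-stabilizer is $gF_ig^{-1}$ is exactly $H \cap gF_ig^{-1}$. I would then choose $H$-orbit representatives of the outer-type vertices; writing the representatives of $H$-orbits of type $i$ as $u_j F_i$ for $j \in J(i)$, the associated vertex stabilizers are precisely the groups $H \cap u_j F_i u_j^{-1}$ appearing in the statement.

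To conclude, I would invoke the fundamental theorem of Bass-Serre theory, which identifies $H$ with the fundamental group of the quotient graph of groups $H \backslash T$. Because all edge groups are trivial, this fundamental group splits as the free product of the vertex groups together with a free group $F$ whose rank equals the first Betti number of the underlying graph of $H \backslash T$ (the free group $F$ encodes the ``extra'' generators associated to edges not lying in a chosen spanning tree). Discarding the trivial central-type vertex groups leaves exactly the free factor $F$ and the intersections $H \cap u_j F_i u_j^{-1}$, yielding the stated Kurosh decomposition.

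The main obstacle is the initial setup of the Bass-Serre tree and the verification, via normal forms, that it is indeed a tree with the claimed stabilizer structure; once this is in place the Kurosh decomposition is essentially a formal corollary of the structure theorem for groups acting on trees with trivial edge stabilizers. An alternative, more elementary but combinatorially heavier route would use the normal form theorem in $\ast_{i \in I} F_i$ directly together with a Schreier-style choice of coset representatives for $H$ in $G$, but the Bass-Serre viewpoint makes geometrically transparent both why the free factor $F$ arises (as the non-simply-connected part of $H \backslash T$) and why the remaining factors must be conjugation-intersections of $H$ with the $F_i$.
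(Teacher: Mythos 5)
Your proposal is a correct and complete outline of the standard Bass--Serre proof of the Kurosh Subgroup Theorem, but note that the paper itself offers no proof of this lemma at all: it is stated as ``a classical theorem of Kurosh'' and used as a black box (its only role in the paper is to feed into Lemma 3.3, the statement that $\langle ab, bc\rangle$ is free inside $\langle a\rangle \ast \langle b\rangle \ast \langle c\rangle$ with $a,b,c$ of order two). So there is nothing in the paper to compare your argument against step by step. That said, your route is the modern standard one: encode the free product as a graph of groups over a star with trivial central vertex group and trivial edge groups, let $H$ act on the Bass--Serre tree, observe that edge stabilizers in $H$ are trivial and vertex stabilizers are the intersections $H \cap gF_ig^{-1}$, and read off the decomposition from the structure theorem for the quotient graph of groups, with the free factor $F$ accounted for by the first Betti number of the quotient graph. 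The alternative you mention --- Kurosh's original combinatorial argument via normal forms and a Schreier-style transversal --- is what one would find in older references, and is heavier but avoids setting up the tree. One small point worth flagging: the statement as printed in the paper contains a typo, writing $A \cap u_jF_iu_j^{-1}$ where it should read $H \cap u_jF_iu_j^{-1}$; you silently and correctly repaired this in your argument, and it would be worth saying so explicitly.
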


\medskip Applying the Kurosh subgroup theorem to the free product of infinite cyclic groups gives the following lemma, which is proved in \cite{xwy}.

\begin{lem} \label{lem1}
Let $G = \langle a \rangle \ast \langle b \rangle \ast \langle c \rangle$ be the free product of the three cyclic subgroups of order two  and $H = \langle ab, bc \rangle$,  then $H$ is free.
\end{lem}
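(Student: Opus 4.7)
The plan is to realize $H$ as a subgroup of an already-free subgroup of $G$, and then invoke the Nielsen--Schreier theorem. Since each of $a$, $b$, $c$ has order two, the assignment $a,b,c\mapsto 1\in\Z/2\Z$ extends to a well-defined homomorphism $\phi\colon G\to\Z/2\Z$ (the defining relations $a^2=b^2=c^2=1$ are sent to $0$). Write $K=\ker\phi$; this is an index-two subgroup consisting exactly of those elements whose reduced normal form in the free product has even length. Since $ab,bc\in K$, we have $H\subseteq K$, and it suffices to show that $K$ itself is free.

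To show $K$ is free, I would apply the Kurosh subgroup theorem to the decomposition $G=\langle a\rangle\ast\langle b\rangle\ast\langle c\rangle$. This expresses $K$ as a free product of a free group $F$ with various intersections of the form $K\cap u\langle x\rangle u^{-1}$, where $x\in\{a,b,c\}$ and $u$ ranges over suitable double-coset representatives. Because each $\langle x\rangle$ has order two, such an intersection is either trivial or equal to the whole conjugate, whose non-identity element is $uxu^{-1}$. A short induction on the reduced length of $u$ shows that $uxu^{-1}$ always has odd reduced length in $G$: if $u=s_1\cdots s_n$ is reduced with $s_n\neq x$, then $s_1\cdots s_nxs_n\cdots s_1$ is itself alternating, hence reduced, of length $2n+1$; if $s_n=x$, the outer letters collapse and one recurses with $u'=s_1\cdots s_{n-1}$. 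Consequently $\phi(uxu^{-1})=1\ne 0$, so no such conjugate lies in $K$, every Kurosh intersection is trivial, and $K=F$ is free.

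The lemma then follows immediately: $H$ is a subgroup of the free group $K$ and is therefore free by Nielsen--Schreier. The only step requiring any thought is the parity argument for conjugates of the order-two generators; everything else is a direct application of Kurosh and Nielsen--Schreier, so there is no real obstacle.
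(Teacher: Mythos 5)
Your proof is correct and rests on the same tool the paper uses, namely the Kurosh subgroup theorem applied to the free product of the three copies of $\Z/2$, with the essential point in both cases being that the relevant subgroup lies in the even-length (parity) kernel and therefore meets no conjugate $u\langle x\rangle u^{-1}$ of an order-two factor. The only cosmetic difference is that you apply Kurosh to the full index-two kernel $K$ and then pass to $H\subseteq K$ via Nielsen--Schreier, whereas the paper (following the cited reference) applies Kurosh directly to $H$; your parity induction on $uxu^{-1}$ is exactly the detail the paper leaves implicit.
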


\medskip Let $(p_0, p_1, p_2)$ be a triple of distinct points in $\partial \ch^2$. The points $p_0, p_1, p_2$ determine complex geodesics $C_0, C_1, C_2$ that are fixed by the inversions $i_0, i_1, i_2$ respectively. For $p=(p_0, p_1, p_2),$ Cartan angular invariant is defined by
$$\A(p)=\A((p_0, p_1, p_2)) = {\rm arg} \left( - \langle {\bf p_0}, \bf{p_1} \rangle \langle \bf{p_1}, \bf{p_2} \rangle \langle \bf{p_2}, \bf{p_0} \rangle \right) \in [-\frac{\pi}{2}, \frac{\pi}{2}],$$

where $\bf{p_0}, \bf{p_1}, \bf{p_2}$ are lifts of  $p_0, p_1, p_2,$ respectively.

\medskip
Define,

$$\rho_{\mathbb{A}} : \Z/2\ast \Z/2\ast \Z/2 \rightarrow {\rm PU}(2,1),$$

$$ (a, b, c) \mapsto (i_0, i_1, i_2).$$
 
 \medskip In this regard, we have the following result by Goldman-Parker-Schwartz, in\cite{RS}. 

\begin{lem}\label{lem2} {\rm (Goldman-Parker-Schwartz Theorem)} Let $i_0, i_1, i_2$ be the inversions of complex geodesics defined by above. Then subgroup generated by $i_0, i_1, i_2$ is discrete if and only if $|\A| \leq \tan^{-1}(\sqrt{\frac{125}{3}}).$ Furthermore, the group $\langle i_0, i_1, i_2 \rangle$ freely generates the free product  $\langle i_0, \rangle \ast \langle i_1 \rangle \ast \langle i_2 \rangle$ of the three cyclic groups of order 2.
\end{lem}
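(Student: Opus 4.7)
The plan is to establish both parts of the theorem---the discreteness dichotomy and the free-product conclusion---along the lines of the program begun by Goldman and Parker and completed by Schwartz. Since the Cartan angular invariant $\A$ is a complete $\PU(2,1)$-invariant of an ordered triple of distinct points in $\partial \ch^2$, I would first reduce to a one-parameter family of representations $\rho_\A : \Z/2 \ast \Z/2 \ast \Z/2 \to \PU(2,1)$ (up to conjugation), using the symmetry $\A \mapsto -\A$ induced by complex conjugation on $\ch^2$ to restrict attention to $\A \geq 0$. After this normalization, both inversions $i_k$ and the angular invariant become explicit functions of a single parameter, which is what makes the subsequent analysis tractable.

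For the ``only if'' direction of the discreteness statement (the easier half, due to Goldman and Parker), I would track the conjugacy class of the word $W = i_0 i_1 i_2$ as a function of $\A$. A direct computation with lifts to $\SU(2,1)$ yields $\mathrm{tr}(W)$ as an explicit function of $\A$, and Goldman's discriminant $f(z) = |z|^4 - 8\Re(z^3) + 18|z|^2 - 27$ for classifying elements of $\SU(2,1)$ vanishes precisely at $\A_0 := \tan^{-1}(\sqrt{125/3})$, where $W$ becomes parabolic. For $\A > \A_0$ the sign of $f$ flips and $W$ is elliptic with generically irrational rotation angle, so $\langle W \rangle$ is already non-discrete and hence so is $\langle i_0, i_1, i_2 \rangle$.

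For the ``if'' direction together with the free-product conclusion (the hard half, due to Schwartz), I would apply the Poincar\'e polyhedron theorem to a fundamental domain bounded by bisectors equidistant from appropriate lifts of the fixed boundary triples. One verifies the side-pairing and the cycle conditions throughout the range $|\A| \leq \A_0$, noting that at the critical parameter the polyhedron acquires a cusp at the fixed point of the now-parabolic element $W$. Once the polyhedron is shown to tessellate $\ch^2$, Poincar\'e's theorem provides a presentation in which the only relations are $i_k^2 = 1$, giving $\langle i_0, i_1, i_2 \rangle \cong \Z/2 \ast \Z/2 \ast \Z/2$. The main obstacle is precisely this tessellation verification: bisectors in $\ch^2$ are not totally geodesic and their pairwise intersections are intricate, so carrying out the cycle check uniformly up to and including the degenerate parameter $\A_0$ is exactly Schwartz's delicate geometric analysis, which I would follow rather than attempt to shortcut.
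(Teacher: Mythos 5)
The paper does not prove this lemma: it is quoted as the Goldman--Parker--Schwartz theorem, with the ``only if'' direction essentially due to Goldman and Parker \cite{gp} and the ``if'' direction together with the free-product conclusion due to Schwartz \cite{RS}. There is therefore no internal proof to compare against, and your proposal is a roadmap of the published argument rather than a proof --- you say as much when you defer the tessellation verification to Schwartz. If the intent is to cite the result, as the paper does, that is fine; if the intent is to prove it, the proposal contains essentially none of the required work.

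As a roadmap it is broadly accurate, but two points need correction. First, in the ``only if'' direction, ``generically irrational rotation angle'' does not close the argument: at the parameters where $W=i_0i_1i_2$ is elliptic of finite order, $\langle W\rangle$ is finite and discreteness is not immediately violated; what fails there is faithfulness, i.e.\ the free-product structure. The correct statement of the theorem concerns the representation being a discrete \emph{embedding}, and the lemma as transcribed in the paper already blurs this distinction. Second, Schwartz's fundamental domain is not bounded by bisectors: the Dirichlet/bisector approach of Goldman and Parker establishes discreteness only on a proper subinterval of the parameter range, which is precisely why the conjecture remained open; Schwartz's surfaces are hybrid objects built from $\R$-circles, and controlling their intersections up to the critical parameter is the actual content of \cite{RS}. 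So the ``if'' half of your outline describes a strategy that is known to be insufficient as stated.
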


\medskip \subsection{Proof of the Theorem \ref{gh}}

\begin{proof}
Let $A$ and $B$ be the given generators having fixed points $p_0 = 0$ and $p_1 = \infty$, respectively. Let $p_2 = ( \zeta, \nu )$ be an arbitrary point from $\partial \ch^2$, which is distinct from $p_0$ and $p_1$. 
Consider $p = (p_0, p_1, p_2)$ be a triple of points in $(\partial \ch^2)^3$ and let $C_0$ be the complex geodesic  $\overrightarrow{p_1p_2}$ spanned by $p_1$ and $p_2$.
Similarly, let $C_1 =\overrightarrow{p_0p_2}$ and $C_2 = \overrightarrow{p_0p_1}$. Here the complex geodesics $C_j$ are given by
$$C_j = \P ( \lbrace z \in \C^{2, 1} | \langle z , c_j \rangle = 0 \rbrace ),$$ for $j = 0, 1, 2.$ 

Define the coordinates of lifts of $p_0, p_1, p_2$ into $\C^{2, 1}$ as:

$$\bf{p_0} = \begin{pmatrix} 0 \\ 0 \\ 1 \end{pmatrix}, \; \;  \bf{p_1} = \begin{pmatrix} 1 \\ 0 \\ 0 \end{pmatrix}, \; \;  \bf{p_2} =\begin{pmatrix} -|\zeta|^2 + vi \\ \sqrt{2}\zeta \\ 1 \end{pmatrix}.$$

Also, we can define the polar vectors corresponding to the above geodesics as follows:
$$c_0 = \begin{pmatrix} \sqrt{2}\bar{\zeta} \\ -1 \\ 0 \end{pmatrix}, \; \; c_1 = \begin{pmatrix} 0 \\ \frac{-|\zeta|^2 - i\nu}{\sqrt{|\zeta|^4 + |\nu|^2}} \\ \frac{-\sqrt{2}\bar{\zeta}}{\sqrt{|\zeta|^4 + |\nu|^2}} \end{pmatrix}, \; \; c_2 = \begin{pmatrix} 0 \\ 1 \\ 0 \end{pmatrix}.$$

\medskip Thus the inversions around $C_j$ are given by

$$i_j(Z) = -Z + \frac{2\langle Z, c_j\rangle}{\langle c_j, c_j\rangle}c_j, \quad j=0, 1, 2.$$

We can write the inversion generators in $\PU(2, 1)$ as:

$$ i_0 = \begin{pmatrix} -1 & -2\sqrt{2}\bar{\zeta} & 4|\zeta|^2 \\
       0 & 1 & -2\sqrt{2}\zeta \\ 0 & 0 & -1 \end{pmatrix} ,\quad  i_1 = \begin{pmatrix} -1 & 0 & 0 \\
       \frac{2\sqrt{2}\zeta(|\zeta|^2 + i\nu)}{|\zeta|^4 + \nu^2} & 1 & 0 \\ \frac{4|\zeta|^2}{|\zeta|^4 + \nu^2} & \frac{-2\sqrt{2}\bar{\zeta}(-|\zeta|^2 + i\nu)}{|\zeta|^4 + \nu^2} & -1 \end{pmatrix}$$

       and 
       $$ i_2 = \begin{pmatrix} -1 & 0 & 0 \\
       0 & 1 & 0 \\ 0 & 0 & -1 \end{pmatrix}.$$
   
Now we will determine the coordinates of $p_2$ such that $$ A = i_0i_2, \; \;\quad B = i_2i_1.$$

\medskip By comparing the entries of $i_0 i_2$  with $A$ we get $$ 2\sqrt{2}{\zeta} = -2\sqrt{2}.$$ 
Therefore, we have $\zeta = -1.$

\medskip Similarly, after comparing the entries of $i_2i_1$ with $B$ we obtained
$$\frac{-4|\zeta|^2}{|\zeta|^4 + \nu^2} = -4|\mu|^2 \quad,\quad  2\sqrt{2} \mu = \frac{2\sqrt{2}\zeta(|\zeta|^2 + i\nu)}{|\zeta|^4 + \nu^2}.$$
Substituting $\zeta = -1$ above we have
$$\mu = \frac{-1 -i\nu}{1 + \nu^2}, \quad |\mu|^2 = \frac{1}{1 + \nu^2}.$$
Hence $\mu$ satisfies the following condition
\begin{equation}
\Re(\mu) + |\mu|^2 = 0.\end{equation}
Now, if $\mu= a+bi$ then we get $a^2+b^2+a=0$. That is, $a^2+a+1/4+b^2=1/4.$
This restricts $\mu$ to lie on the circle $|\mu+{\frac{1}{2}}|={\frac{1}{2}}.$

\medskip Now, we will calculate the angular invariant $\A(p_0, p_1, p_2)$,
Since,  
       $$ {\langle \bf{p_0} , \bf{p_1}  \rangle} = 1, \quad {\langle \bf{p_1}, \bf{p_2} \rangle} = 1 \quad \text{and} \quad {\langle \bf{p_2}, \bf{p_0} \rangle} = -1 +i\nu$$
       
       we have, 
       
       \begin{equation}
       \A(p_0, p_1, p_2) = {\rm arg} (1 - i\nu).
\end{equation}      

Observe that
$$\tan(\A({ p})) = \frac{\Im(1 - i\nu)}{\Re(1 - i\nu)} = -\nu.$$

\medskip Thus by Lemma \ref{lem2}, we see that the group $\langle i_0, i_1, i_2 \rangle$ freely generates the free product $\langle i_0 \rangle \ast \langle i_1 \rangle \ast \langle i_2 \rangle$  when 

$$|\tan \A(p) | \leq \sqrt{\frac{125}{3}}.$$
So we get that,
 $$ |-\nu| = |\nu| \leq \sqrt{\frac{125}{3}}.$$
By applying Lemma \ref{lem1}, we see that $\langle i_0i_2, i_1i_2 \rangle = \langle A, B \rangle$ is free if  $$|\nu|^2 \leq \frac{125}{3}.$$

Since, $|\mu|^2 = \frac{1}{1 + |\nu|^2},$ we have, $|\nu|^2 = -1 + \frac{1}{|\mu|^2},$ i.e.,  $  |\mu|^2 \geq \frac{3}{128}.$
       
    Now by using equation $(3)$, we have  $$  |\mu|^2 = -\Re(\mu) \geq \frac{3}{128}.$$

\end{proof}
\medskip \subsection{Proof of the Corollary \ref{ghc}}
\begin{proof}

Let 
$$A_0 = A \quad \text{and}~ \quad  B_0 = \begin{pmatrix} 1 & 0 & 0 \\ 2\sqrt{2}\tau & 1 & 0 \\ -4|\tau|^2 & -2\sqrt{2}\bar{\tau} & 1
\end{pmatrix},$$

where $$\tau = \Re(\mu) + \sqrt{|\mu|^2 -  \Re(\mu)^2}~i.$$
By conjugating $A_0$ and $B_0$ by diagonal matrix $D={\rm diag}(\mu, \mu, \mu)$, where $\mu \in \Sp(1)$, we get $A= DA_0 D^{-1}$ and $B= DB_0 D^{-1},$ respectively. 
Now applying Theorem \ref{gh} on $\langle A_0, B_0 \rangle$, we see that $\langle A_0, B_0 \rangle$ is free if

\[\frac{3}{128} \leq|\tau|^2= -\Re(\tau) \;.\] 

Consequently, if 
 $  |\mu|^2 = -\Re(\mu) \geq \frac{3}{128}$
then ${\rm G}$ is freely generated by $A$ and $B.$
\end{proof}


\begin{thebibliography}{999}

\bibitem[AK69]{ak1}
A.W. Knapp, 
\newblock \emph{Doubly generated Fuchsian groups}, 
\newblock Michigan Math. J. 15 (1969) 289–304.

\bibitem[AK07]{ak}
B.N. {Apanasov} and I.~{Kim},
\newblock \emph{Cartan angular invariant and deformations of rank 1 symmetric spaces},
\newblock  Sb. Math., 198(2):147--169, 2007.

\bibitem[CG74]{chen}
S.~S. Chen and L.~Greenberg,
\newblock \emph{Hyperbolic spaces},
\newblock In Contributions to analysis (a collection of papers dedicated to Lipman Bers), Academic Press, New York.
\newblock pages 49--87, 1974.

\bibitem[CJR58]{cjr} 
B. Chang, S. A. Jenning, and R. Ree,
\newblock \emph{On certain pairs of matrices which generate free groups}, 
\newblock In \emph {Canad. J. Math. 10(1958), 279–284.}

\bibitem[LU69]{lyu} 
R. C. Lyndon and J. L. Ullman,
\newblock \emph{Groups generated by two parabolic linear fractional transformations},
\newblock In \emph{Canad. J. Math. 21(1969), 1388–1403.}

\bibitem[LU68]{lu} 
R.C. Lyndon, J.L. Ullman, 
\newblock \emph{Pairs of real 2-by-2 matrices that generate free products}, 
\newblock Michigan Math. J. 15 (1968) 161–166.

\bibitem[LN47]{lns} 
L. N. Sanov, 
\newblock \emph{A property of a representation of a free group},
\newblock Doklady Akad. Nauk SSSR 57(1947), 657–659 (Russian).

\bibitem[KP03]{kp}
Inkang Kim and John~R. Parker,
\newblock \emph{Geometry of quaternionic hyperbolic manifolds},
\newblock Math. Proc. Cambridge Philos. Soc., 135(2):291--320, 2003.

\bibitem[NP92]{np}
N. Purzitsky, 
\newblock \emph{Two generator discrete free products}, 
\newblock Math. Z. 126 (1992) 209–223.

\bibitem[GP92]{gp} 
W.~M. Goldman and J.~R. Parker,
\newblock \emph{Complex hyperbolic ideal triangle groups},
\newblock J. Reine Angew. Math,
\newblock 425(1992), 71–86.

\bibitem[JL55]{bre}
J. L. Brenner.
\newblock \emph{Quelques groupes libres de matrices},
\newblock In C. R. Acad. Sci. Paris 241(1955), 1689–1691.

\bibitem[XWY13]{xwy}
BaoHua Xie, JieYanWang, and YuePing Jiang,
\newblock \emph{Free Groups Generated by Two Heisenberg Translations},
\newblock In Canad. Math. Bull. Vol. 56 (4), 2013 pp. 881–889.

\bibitem[RS]{RS}
R. E. Schwartz,
\newblock \emph{Ideal triangle groups, dented tori, and numerical analysis},
\newblock  Ann. of Math. 153(2001), no.3, 533-598.

\bibitem[XJ10]{xyj}
B. Xie and Y. Jiang,
\newblock \emph{Groups generated by two elliptic elements in $\PU(2, 1)$}, 
\newblock In Linear Algebra Appl.
433(2010), no. 11–12, 2168-2177.


\end{thebibliography}
\end{document}